\documentclass[10pt,a4paper]{article}
\usepackage[margin=1.1cm]{geometry}
\usepackage{amsmath,amsthm,amsfonts,amssymb,amscd,cite,graphicx}

\usepackage{titlesec}
\titleformat{\section}
{\normalfont\fontsize{12}{15}\bfseries}{\thesection}{1em.}{}

\newtheorem{proposition}{Proposition}[section]

\newtheorem{problem}{Problem}[section]

\newtheorem{theorem}{Theorem}[section]

\allowdisplaybreaks[4]

\let\oldbibliography\thebibliography
\renewcommand{\thebibliography}[1]{%
  \oldbibliography{#1}%
  \setlength{\itemsep}{-2pt}%
}

\baselineskip=1.20in

\begin{document}

\baselineskip=0.20in

\noindent
{\large \bf Perfect Roman domination in middle graphs}\\

\noindent
Kijung Kim$^{1,}\footnote{Corresponding author (knukkj@pusan.ac.kr)}$ \\

\noindent
\footnotesize $^1${\it Department of Mathematics, Pusan National University, Busan 46241, Republic of Korea}\\
\noindent

\noindent
 (\footnotesize Received: Day Month 202X. Received in revised form: Day Month 202X. Accepted: Day Month 202X. Published online: Day Month 202X.)\\

\setcounter{page}{1} \thispagestyle{empty}

\baselineskip=0.20in

\normalsize

 \begin{abstract}
 \noindent
 The middle graph $M(G)$ of a graph $G$ is the graph obtained by subdividing each edge of $G$ exactly once and joining all these newly introduced vertices of adjacent edges of $G$.
 A perfect Roman dominating function on a graph $G$ is a function $f : V(G) \rightarrow \{0, 1, 2\}$ satisfying the condition that every vertex $v$ with $f(v)=0$ is adjacent to exactly one vertex $u$ for which $f(u)=2$.
 The weight of a perfect Roman dominating function $f$ is the sum of weights of vertices.
 The perfect Roman domination number is the minimum weight of a perfect Roman dominating function on $G$.
 In this paper, we give a characterization of middle graphs with equal Roman domination and perfect Roman domination numbers.
 \\[2mm]
 {\bf Keywords:} perfect domination; Roman domination; perfect Roman domination; middle graph. \\[2mm]
 {\bf 2020 Mathematics Subject Classification:} 05C69.
 \end{abstract}

\baselineskip=0.20in

\section{Introduction}

Let $G=(V,E)$ be an undirected graph with the vertex set $V=V(G)$ and edge set $E=E(G)$.
The \textit{order} of $G$ is defined as the cardinality of $V$.
The \textit{open neighborhood} of $v \in V(G)$ is the set $N(v) = \{ u \in V(G) \mid uv \in E(G)\}$.
In \cite{HY}, Hamada and Yoshimura defined the middle graph of a graph.
The \textit{middle graph} $M(G)$ of a graph $G$ is the graph obtained by subdividing each edge of $G$ exactly once
and joining all these newly introduced vertices of adjacent edges of $G$.
The precise definition of $M(G)$ is as follows.
The vertex set $V(M(G))$ is $V(G) \cup E(G)$.
Two vertices $v, w \in V(M(G))$ are adjacent in $M(G)$ if
(i) $v, w \in E(G)$ and $v, w$ are adjacent in $G$ or
(ii) $v \in V(G)$, $w \in E(G)$ and $v, w$ are incident in $G$.

The study of Roman domination was motivated by the defense strategies used to defend the Roman Empire during the reign of Emperor Constantine the Great, 274--337 AD. The concept of Roman domination was introduced in \cite{CDHH,RR,S}.
A function $f : V(G) \rightarrow \{0, 1, 2\}$ is a \textit{Roman dominating function} on $G$ if
every vertex $v \in V(G)$ for which $f(v)=0$ is adjacent to at least one vertex $u \in V(G)$ for which $f(u)=2$.
The weight of a Roman dominating function is the value $\omega(f):= \sum_{v \in V(G)}f(v)$.
The \textit{Roman domination number} $\gamma_R(G)$ is the minimum weight of a Roman dominating function on $G$.
As a variant of Roman domination, a function $f : V(G) \rightarrow \{0, 1, 2\}$ is a \textit{perfect Roman dominating function} on $G$ if
every vertex $v \in V(G)$ for which $f(v)=0$ is adjacent to exactly one vertex $u \in V(G)$ for which $f(u)=2$.
Similarly, perfect Roman domination number $\gamma_{pR}(G)$ is defined.
In \cite{Henning2018}, Henning et al. introduced the notion of perfect Roman domination and showed that if $T$ is a tree on $n \geq 3$ vertices, then $\gamma_{pR}(T) \leq \frac{4}{5}n$.
In \cite{Darkooti}, Darkooti et al. proved that it is NP-complete to decide whether a graph has a perfect Roman dominating function, even if the graph is bipartite.
This suggests determining the exact value of perfect Roman domination numbers for special classes of graphs.
Recently, Kim proved the following result.

\begin{theorem}[\cite{Kim}]\label{main:graphn}
Let $G$ be a graph of order $n$.
Then $\gamma_R(M(G))=n$.
\end{theorem}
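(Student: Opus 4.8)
The plan is to prove the equality by establishing the two inequalities $\gamma_R(M(G)) \le n$ and $\gamma_R(M(G)) \ge n$ separately, exploiting the structure of $M(G)$: the original vertices $V(G)$ form an independent set whose $M(G)$-neighbours are exactly the subdivision vertices incident to them in $G$, while two subdivision vertices are adjacent in $M(G)$ precisely when the corresponding edges of $G$ share an endpoint.

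For the upper bound I would fix a maximal matching $S$ of $G$ and define $f : V(M(G)) \to \{0,1,2\}$ by $f(e)=2$ for every subdivision vertex $e\in S$, $f(e)=0$ for every subdivision vertex $e\in E(G)\setminus S$, $f(v)=0$ for every $v\in V(G)$ saturated by $S$, and $f(v)=1$ for every $v\in V(G)$ not saturated by $S$. I would then check that $f$ is a Roman dominating function: a subdivision vertex $e\notin S$ shares, by maximality of $S$, an endpoint with some edge of $S$, hence is adjacent in $M(G)$ to a vertex of value $2$; and a saturated vertex $v$ is incident in $G$ to its matching edge, hence adjacent in $M(G)$ to a vertex of value $2$. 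Since $S$ saturates exactly $2|S|$ vertices, the weight is $2|S| + (n-2|S|) = n$, so $\gamma_R(M(G)) \le n$. (If $G$ has no edges this degenerates to $f\equiv 1$, which is still correct, and isolated vertices are handled automatically since they are never saturated.)

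For the lower bound I would run a discharging/token argument. Given any Roman dominating function $f$ on $M(G)$, assign to each $v\in V(G)$ one token of value $1$: if $f(v)\ge 1$, draw it from $v$ itself; if $f(v)=0$, then since the only $M(G)$-neighbours of $v$ are the subdivision vertices incident to $v$, some such vertex $e$ satisfies $f(e)=2$, and we draw the token from $e$. The total is $n$ tokens. The crucial observation is that no vertex of $M(G)$ is overdrawn: an original vertex is drawn from only by itself, and only when its value is at least $1$; a subdivision vertex $e$ is drawn from only by its at most two endpoints, and only when $f(e)=2$, so the amount taken from it is at most $2=f(e)$. Hence $n = \sum(\text{tokens}) \le \sum_{x\in V(M(G))} f(x) = \omega(f)$, giving $\gamma_R(M(G)) \ge n$.

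I expect the main subtlety to lie in the bookkeeping of the lower bound — specifically the point that a subdivision vertex carrying value $2$ may legitimately "pay" for both of its endpoints at once. This is exactly the phenomenon that makes the maximal-matching construction tight, and once it is pinned down both inequalities follow by routine verification.
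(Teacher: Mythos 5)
Your proof is correct, but there is nothing in this paper to compare it against: Theorem \ref{main:graphn} is imported from the reference \cite{Kim} and stated without proof here, the present paper only using it as a black box in the proof of Theorem \ref{thm-1}. Judged on its own, your two-sided argument is sound and self-contained. The upper bound via a maximal matching $S$ works exactly as you say: every unmatched subdivision vertex meets a matched edge by maximality, every saturated original vertex is adjacent in $M(G)$ to its own matching edge, and the weight $2|S|+(n-2|S|)=n$ is independent of $|S|$; the degenerate edgeless case is indeed covered by $f\equiv 1$. The lower bound's charging scheme is also correct, and you have identified the right structural facts: $V(G)$ is independent in $M(G)$, the $M(G)$-neighbours of an original vertex are precisely the subdivision vertices of its incident edges (so a vertex with $f(v)=0$ must be paid for by such a subdivision vertex carrying $2$, and an isolated vertex of $G$ can never have value $0$), and a subdivision vertex is charged only by its at most two endpoints, hence by at most $2=f(e)$. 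Summing charges gives $n\le\omega(f)$, completing the proof. If anything, your argument proves slightly more than is needed in this paper, since it also exhibits the explicit matching-based $\gamma_R^\star(G)$-functions whose structure (edges of a matching assigned $2$, unsaturated vertices assigned $1$) is exactly the shape analysed in Theorem \ref{thm-1} and Propositions \ref{prop-1} and \ref{prop-2}.
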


Based on this result, we characterize all graphs $G$ such that $\gamma_{R}(M(G)) = \gamma_{pR}(M(G))$.
We try to determine the exact value of perfect Roman domination numbers in middle graphs of special classes of graphs.

\section{Main Results}\label{sec-2}

In this section, we introduce the concept of a middle Roman dominating function to study the Roman domination number of the middle graph $M(G)$ for a given graph $G$.
A \textit{middle Roman dominating function} (MRDF) on a graph $G$ is a function $f: V \cup E \rightarrow \{0, 1, 2\}$ satisfying
the following conditions:
(i) every element $x \in V$ for which $f(x)=0$ is incident to at least one element $y \in E$ for which $f(y)=2$,
(ii) every element $x \in E$ for which $f(x)=0$ is adjacent or incident to at least one element $y \in V \cup E$ for which $f(y)=2$.
A MRDF $f$ gives an ordered partition $(V_0 \cup E_0, V_1 \cup E_1, V_2 \cup E_2)$ (or $(V_0^f \cup E_0^f, V_1^f \cup E_1^f, V_2^f \cup E_2^f)$ to refer to $f$) of $V \cup E$, where $V_i := \{ x \in V \mid f(x)=i \}$ and $E_i := \{ x \in E \mid f(x)=i \}$.
The weight of a middle Roman dominating function $f$ is $\sum_{x \in V \cup E} f(x)$.
The \textit{middle Roman domination number} $\gamma_R^\star(G)$ of $G$ is the minimum weight of a middle Roman dominating function of $G$.
A \textit{$\gamma_R^\star(G)$-function} is a MRDF on $G$ with weight $\gamma_R^\star(G)$.
Similarly, we can define a perfect middle Roman dominating function (PMRDF) and related definitions.
For a subset $S$ of $G$, the subgraph obtained from $G$ by deleting all vertices in $S$ and all edges incident with $S$ is denoted by $G - S$.
For terminology and notation on graph theory not given here, the reader is referred to \cite{Bondy08}.
We make use of the following result.

\begin{proposition}[\cite{Yue-2020}]\label{pro-0}
Let $G$ be a graph with components $G_1, \dotsc, G_t$.
Then $\gamma_{pR}(G) = \sum_{i=1}^t \gamma_{pR}(G_i)$.
\end{proposition}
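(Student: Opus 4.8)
The plan is to prove the two inequalities $\gamma_{pR}(G) \le \sum_{i=1}^t \gamma_{pR}(G_i)$ and $\gamma_{pR}(G) \ge \sum_{i=1}^t \gamma_{pR}(G_i)$ separately, relying on the single structural fact that distinct components share no edge, so the defining condition of a perfect Roman dominating function---that every $0$-vertex has \emph{exactly one} neighbour of value $2$---is entirely local to a component.

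For the upper bound, I would for each $i$ fix a $\gamma_{pR}(G_i)$-function $f_i : V(G_i) \to \{0,1,2\}$ and define $f : V(G) \to \{0,1,2\}$ by setting $f(v) = f_i(v)$ whenever $v \in V(G_i)$; this is well defined since the sets $V(G_i)$ partition $V(G)$. Given $v \in V(G)$ with $f(v) = 0$, say $v \in V(G_i)$, every neighbour of $v$ in $G$ lies in $V(G_i)$ and $f$ agrees with $f_i$ on those neighbours; since $f_i$ is a perfect Roman dominating function, exactly one such neighbour has value $2$. Hence $f$ is a perfect Roman dominating function on $G$, and $\gamma_{pR}(G) \le \omega(f) = \sum_{i=1}^t \omega(f_i) = \sum_{i=1}^t \gamma_{pR}(G_i)$.

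For the lower bound, I would take a $\gamma_{pR}(G)$-function $f$ and, for each $i$, let $f_i$ be its restriction to $V(G_i)$. If $v \in V(G_i)$ has $f_i(v) = 0$, then $f(v) = 0$, and since the neighbourhood of $v$ in $G$ is contained in $V(G_i)$, the unique value-$2$ neighbour of $v$ guaranteed by $f$ already lies in $G_i$; thus $f_i$ is a perfect Roman dominating function on $G_i$, giving $\gamma_{pR}(G_i) \le \omega(f_i)$. Summing over $i$ and using that the $V(G_i)$ partition $V(G)$ yields $\sum_{i=1}^t \gamma_{pR}(G_i) \le \sum_{i=1}^t \omega(f_i) = \omega(f) = \gamma_{pR}(G)$, which completes the argument.

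There is no serious obstacle here: the only point requiring care is the \emph{uniqueness} clause in the definition, together with the observation that no vertex of one component can serve as---or interfere with---the value-$2$ dominator of a vertex in another component, because there are no cross-component edges; this single observation handles both directions. Everything else is bookkeeping with the partition $V(G) = \bigcup_{i=1}^t V(G_i)$.
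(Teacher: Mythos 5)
Your argument is correct and is exactly the standard additivity proof: since components share no edges, the ``exactly one neighbour of value $2$'' condition is local, so gluing optimal functions gives $\le$ and restricting an optimal function gives $\ge$. The paper itself states this proposition without proof (citing Yue and Song), so there is nothing further to compare; your write-up fills that gap correctly.
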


The following is our main theorem.

\begin{theorem}\label{thm-1}
Let $G$ be a graph.
Then $\gamma_R^\star(G) = \gamma_{pR}^\star(G)$ if and only if there exists a $\gamma_R^\star(G)$-function such that
(i) vertices incident to edges in $E_2$ are adjacent to vertices in $V_1$ and
(ii) $G - \{u, v \in V(G) \mid uv \in E_2 \}$ is an empty graph.
\end{theorem}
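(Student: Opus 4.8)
The plan is to reduce the statement to a rigidity property of minimum middle Roman dominating functions and then to decide when such a function can be made perfect. First, every perfect MRDF is an MRDF, so $\gamma_R^\star(G)\le\gamma_{pR}^\star(G)$; and since an MRDF on $G$ is precisely a Roman dominating function on $M(G)$, Theorem~\ref{main:graphn} gives $\gamma_R^\star(G)=\gamma_R(M(G))=n$, where $n=|V(G)|$. Hence $\gamma_R^\star(G)=\gamma_{pR}^\star(G)$ holds if and only if there is a perfect MRDF of weight $n$, i.e.\ if and only if some $\gamma_R^\star(G)$-function is simultaneously perfect. So it suffices to (a) describe the shape of an arbitrary $\gamma_R^\star(G)$-function, and (b) decide, for a function of that shape, exactly when it is perfect, and then match the outcome against conditions (i) and (ii).

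For (a) I would sharpen the weight count behind Theorem~\ref{main:graphn}. Put $A=V(G)\setminus V_0$. Each $x\in V_0$ is incident in $M(G)$ to some element of value $2$, so $V_0$ lies in the set of endpoints of the edges of $E_2$, whence $|V_0|\le 2|E_2|$; therefore
\[
\sum_{x\in V(G)\cup E(G)}f(x)\ \ge\ |A|+2|E_2|\ \ge\ (n-|V_0|)+|V_0|\ =\ n .
\]
Equality at weight $n$ throughout this chain forces $V_2=\emptyset$, $E_1=\emptyset$, $|V_0|=2|E_2|$ (so $E_2$ is a matching $M$ and $V_0=V(M)$, its set of endpoints), and $V_1=V(G)\setminus V(M)$. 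Finally the MRDF requirement applied to an edge-vertex $e\in E(G)\setminus M$ of value $0$ forces $e$ to share an endpoint with an $M$-edge, i.e.\ to meet $V(M)$; hence $G-V(M)$ has no edges, which is exactly condition (ii), so (ii) is in fact automatic for every $\gamma_R^\star(G)$-function. Conversely, for any matching $M$ with $G-V(M)$ edgeless, the function equal to $2$ on $M$, to $1$ on $V(G)\setminus V(M)$, and to $0$ elsewhere is an MRDF of weight $n$; taking $M$ maximal shows such functions exist, re-proving $\gamma_R^\star(G)=n$.

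For (b), fix a $\gamma_R^\star(G)$-function $f$ with matching $M=E_2$ as in (a). A vertex-vertex of value $0$ lies in $V(M)$, hence in exactly one edge of the matching $M$, and its only value-$2$ neighbours in $M(G)$ are the $M$-edges through it, so the ``exactly one'' condition is automatic on $V_0$. An edge-vertex $e=uv$ of value $0$ lies in $E(G)\setminus M$; among its neighbours in $M(G)$ the endpoints $u,v$ are not of value $2$, while the value-$2$ neighbours are exactly the $M$-edges meeting $\{u,v\}$, of which --- since $M$ is a matching --- there are as many as there are endpoints of $uv$ in $V(M)$. As $f$ is an MRDF this number is at least $1$, so $f$ is perfect if and only if this number is never $2$, i.e.\ no edge of $E(G)\setminus M$ has both endpoints in $V(M)$; equivalently every vertex incident to an edge of $E_2$ has all of its neighbours, apart from its $E_2$-partner, in $V_1$, which is condition (i). Chaining (a) and (b) now yields the theorem.

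I expect step (a) to be the main obstacle: the point is that \emph{minimality forces} this rigid structure, so the real work is the chain of equalities above read the right way round, not the (easy) verification that structured functions attain weight $n$. Step (b) is then a short neighbourhood computation in $M(G)$, whose only delicate point is reconciling the MRDF condition (``at least one'' value-$2$ neighbour) with perfectness (``exactly one''); translating the two resulting edge/vertex conditions into the partition language of (i) and (ii) is routine.
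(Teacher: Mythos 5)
Your proposal is correct, and it takes a genuinely different route from the paper's proof. The paper establishes the forward direction through six claims: the first three ($V_2=\emptyset$, $E_1=\emptyset$, every $E_2$-edge has both endpoints in $V_0$) are each proved by deleting a few vertices, modifying $f$, and applying Theorem~\ref{main:graphn} to the resulting subgraph to force a contradiction, while the remaining claims lean on the perfectness hypothesis; the converse direction is asserted in a single sentence. You avoid all the deletion arguments: starting only from $\gamma_R^\star(G)=n$ (which your maximal-matching construction in fact re-derives, so Theorem~\ref{main:graphn} is not strictly needed), you squeeze the equality chain $\omega(f)=|V_1|+2|V_2|+|E_1|+2|E_2|\ge |V_1|+|V_2|+2|E_2|\ge (n-|V_0|)+|V_0|=n$ to show that \emph{every} $\gamma_R^\star(G)$-function has $V_2=E_1=\emptyset$, $E_2$ a matching $M$ with $V_0=V(M)$ and $V_1=V(G)\setminus V(M)$, and that condition (ii) is automatic (each $0$-edge needs an adjacent $M$-edge, so every edge meets $V(M)$). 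This buys something the paper does not make explicit: (ii) holds for all minimum MRDFs, (i) is the sole obstruction to perfectness, and the converse implication gets an actual proof rather than a one-line assertion. Two details deserve to be written out in a final version: equality in $|V_0|\le 2|E_2|$ forces simultaneously $V_0=V(E_2)$ and that $E_2$ is a matching (you say this, and it is the crux of step (a)); and in step (b), the $M$-edges at the two endpoints of a $0$-edge $uv\notin M$ are necessarily distinct, since an $M$-edge containing both $u$ and $v$ would be $uv$ itself --- this is what equates the number of value-$2$ neighbours of $uv$ in $M(G)$ with its number of endpoints in $V(M)$. Finally, you read condition (i) as ``every neighbour of an endpoint of an $E_2$-edge, other than its $E_2$-partner, lies in $V_1$''; the literal wording of (i) is ambiguous on this point, but your reading is exactly what the paper's Claim 5 establishes, so it is the right interpretation and the equivalence you prove matches the intended theorem.
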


\begin{proof}
If $G$ is an empty graph, then the statement holds.
By Proposition \ref{pro-0}, from now on we assume that $G$ is connected and not empty.

($\Rightarrow$):
Let $f=(V_0 \cup E_0, V_1 \cup E_1, V_2 \cup E_2)$ be a $\gamma_R^\star(G)$-function  such that $\gamma_R^\star(G) = \gamma_{pR}^\star(G)$.
We proceed by proving six claims.

\vskip5pt
\textbf{Claim 1.} $V_2 = \emptyset$.

Suppose that there exists $v \in V_2$.
Consider $G - \{v\}$.
Define $g : V(G - \{v\}) \cup E(G - \{v\}) \rightarrow \{0, 1, 2\}$ by $g(u) = \text{min}\{f(u) + f(uv), 2\}$ for each $u \in N(v)$
and $g(x) =f(x)$ otherwise.
Then $g$ is a MRDF on $G - \{v\}$ with $\omega(g)=n-2$.
Since $G - \{v\}$ has order $n-1$, by Theorem \ref{main:graphn} $\omega(g) \geq \gamma_R^\star(G - \{v\})=n-1$, a contradiction.

\vskip5pt
\textbf{Claim 2.} $E_1 = \emptyset$.

Suppose that there exists $e \in E_1$.
Let $u$ and $v$ be vertices incident to $e$.
We divides the following three cases depending on the values of $u$ and $v$ assigned under $f$.

\vskip5pt
Case 1. $u, v \in V_0$.
There exist $e_1, e_2 \in E_2$ such that $u, v$ are incident to $e_1, e_2$, respectively.
Also, there exist $u', v' \in V(G)$ such that $u', v'$ are incident to $e_1, e_2$, respectively.
Consider $G - \{u, u', v, v'\}$.
Define $g : V(G - \{u, u', v, v'\}) \cup E(G - \{u, u', v, v'\}) \rightarrow \{0, 1, 2\}$ by $g(x) =f(x)$.
Since there is no edge in $E_2$ adjacent to $e_1$ or $e_2$, $g$ is a MRDF on $G - \{u, u', v, v'\}$.
Then $g$ is a MRDF with $\omega(g)\leq n-5$.
Since $G - \{u, u', v, v'\}$ has order $n-4$, by Theorem \ref{main:graphn} $\omega(g) \geq \gamma_R^\star(G - \{u, u', v, v'\})=n-4$, a contradiction.

\vskip5pt
Case 2. $u \in V_0, v \in V_1$ or $v \in V_0, u \in V_1$.
By symmetry, assume that $u \in V_0$ and $v \in V_1$.
There exists $e_1 \in E_2$ incident to $u$.
Also, there exist $u' \in V(G)$ incident to $e_1$.
Consider $G - \{u, u', v\}$.
Define $g : V(G - \{u, u', v\}) \cup E(G - \{u, u', v\}) \rightarrow \{0, 1, 2\}$ by $g(x) = \text{min}\{f(x) + f(xv), 2\}$ for each $x \in N(v) \setminus \{u\}$ and $g(x) =f(x)$ otherwise.
Then $g$ is a MRDF with $\omega(g) \leq n-4$.
Since $G - \{u, u', v\}$ has order $n-3$, by Theorem \ref{main:graphn} $\omega(g) \geq \gamma_R^\star(G - \{u, u', v\})=n-3$, a contradiction.

\vskip5pt
Case 3. $u, v \in V_1$.
Consider $G - \{u, v\}$.
Define $g : V(G - \{u, v\}) \cup E(G - \{u, v\}) \rightarrow \{0, 1, 2\}$ by $g(x) = \text{min}\{f(x) + f(xu), 2\}$ for each $x \in N(u) \setminus \{v\}$, $g(x) = \text{min}\{f(x) + f(xv), 2\}$ for each $x \in N(v) \setminus \{u\}$ and $g(x) =f(x)$ otherwise.
Then $g$ is a MRDF with $\omega(g) \leq n-3$.
Since $G - \{u, v\}$ has order $n-2$, by Theorem \ref{main:graphn} $\omega(g) \geq \gamma_R^\star(G - \{u, v\})=n-2$, a contradiction.

\vskip5pt
\textbf{Claim 3.} Every edge in $E_2$ is incident to vertices in $V_0$.

Let $e \in E_2$ and $e=uv$.
Suppose that $u \not\in V_0$ or $v \not\in V_0$.
Consider $G - \{u, v\}$.
Define $g : V(G - \{u, v\}) \cup E(G - \{u, v\}) \rightarrow \{0, 1, 2\}$ by $g(x) = \text{min}\{f(x) + f(xu), 2\}$ for each $x \in N(u) \setminus \{v\}$, $g(x) = \text{min}\{f(x) + f(xv), 2\}$ for each $x \in N(v) \setminus \{u\}$ and $g(x) =f(x)$ otherwise.
Then $g$ is a MRDF with $\omega(g)\leq n-3$.
Since $G - \{u, v\}$ has order $n-2$, by Theorem \ref{main:graphn} $\omega(g) \geq \gamma_R^\star(G - \{u, v\})=n-2$, a contradiction.

\vskip5pt
\textbf{Claim 4.} Every edge in $E_2$ is adjacent to edges in $E_0$.

By Claims 2, 3 and the hypothesis that $f$ is a PMRDF, Claim 4 follows.

\vskip5pt
\textbf{Claim 5.} Vertices incident to edges in $E_2$ are adjacent to vertices in $V_1$.

Let $uv=e \in E_2$.
Suppose that $u$ is adjacent to $w \in V_0$.
Then $w$ must be incident to some $e' \in E_2$.
Since $uw \in E_0$ is adjacent to $e'$, this is a contradiction.

\vskip5pt
\textbf{Claim 6.} $G - \{u, v \in V(G) \mid uv \in E_2 \}$ is an empty graph.

If $H:= G - \{u, v \in V(G) \mid uv \in E_2 \}$ is not empty, then every edge of $H$ must be assigned $1$ under $f$.
The weight of $f$ is not equal to the order $G$, a contradiction.

\vskip10pt
($\Leftarrow$):
The conditions (i) and (ii) imply that the $\gamma_R^\star(G)$-function is a PMRDF on $G$.
\end{proof}

Based on Theorem \ref{thm-1},
we determine the exact values of perfect Roman domination numbers for middle graphs of paths and cycles.

\begin{proposition}\label{prop-1}
For a path $P_n$ of order $n$, $\gamma_{pR}^\star(P_n)=n$
\end{proposition}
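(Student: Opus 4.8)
The plan is to verify the two conditions of Theorem~\ref{thm-1} for $G = P_n$, which by Theorem~\ref{main:graphn} ($\gamma_R^\star(P_n) = n$) would immediately give $\gamma_{pR}^\star(P_n) = n$. However, a direct check shows that condition (ii) of Theorem~\ref{thm-1} fails for $P_n$ in general: if we try to build a $\gamma_R^\star$-function by placing $2$'s on a matching of edges (each contributing weight $2$ and covering its two endpoints plus adjacent subdivision vertices), the graph $G$ minus the endpoints of those edges is typically not empty — the path has too few edges relative to vertices to cover everything this way. So the cleaner route is to construct an explicit perfect middle Roman dominating function on $M(P_n)$ of weight exactly $n$, and then invoke the lower bound $\gamma_{pR}^\star(P_n) \geq \gamma_R^\star(P_n) = n$ (since every PMRDF is an MRDF).

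Concretely, label $V(P_n) = \{v_1, \dots, v_n\}$ and write $e_i$ for the edge $v_i v_{i+1}$, so $V(M(P_n)) = \{v_1,\dots,v_n\} \cup \{e_1,\dots,e_{n-1}\}$, where $e_i \sim e_{i+1}$, $e_i \sim v_i$, and $e_i \sim v_{i+1}$. First I would handle small cases and the general periodic pattern separately. The idea is to work in blocks: on each block of the path assign $2$ to a single subdivision vertex $e_j$, which perfectly dominates $v_j$, $v_{j+1}$, and forces neighbouring $e$'s to $0$; then assign $1$ to whatever $v_i$ or $e_i$ is not yet perfectly dominated. One checks that a period-$3$ pattern on the subdivision vertices (put $2$ on $e_1, e_4, e_7, \dots$) dominates two of every three consecutive original vertices and leaves a controlled set of leftover vertices, each of which gets weight $1$; tallying shows the total weight is $n$, and crucially each $0$-vertex sees exactly one $2$, not two (consecutive $2$'s on the $e$-path are avoided by the spacing, and each original vertex $v_i$ is incident to at most two subdivision vertices $e_{i-1}, e_i$, at most one of which carries a $2$). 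A small adjustment at the two ends of the path (depending on $n \bmod 3$) completes the construction.

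The main obstacle I anticipate is the end-of-path bookkeeping: the periodic assignment must be patched near $v_1$ and $v_n$ so that (a) no $0$-vertex ends up adjacent to two $2$-vertices, (b) every $0$-vertex is adjacent to at least one $2$-vertex, and (c) the total weight stays exactly $n$ rather than $n+1$ or $n+2$. This requires splitting into cases according to $n \bmod 3$ and exhibiting the explicit function on the first and last few vertices in each case, then verifying the perfect-domination condition locally there. Once the upper bound $\gamma_{pR}^\star(P_n) \leq n$ is established by this construction, combining with $\gamma_{pR}^\star(P_n) \geq \gamma_R^\star(P_n) = n$ from Theorem~\ref{main:graphn} closes the proof.
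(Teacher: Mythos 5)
Your route---an explicit period-$3$ PMRDF of weight $n$ on $M(P_n)$, patched at the ends according to $n \bmod 3$, combined with the lower bound $\gamma_{pR}^\star(P_n) \geq \gamma_R^\star(P_n) = n$ from Theorem~\ref{main:graphn}---is essentially the paper's proof, which simply writes out the three explicit assignments for the residues of $n$ modulo $3$. Your side remark that condition (ii) of Theorem~\ref{thm-1} fails for paths is actually incorrect (for instance, when $n \equiv 0 \pmod 3$, assigning $2$ to the edges $v_{3i+2}v_{3i+3}$ and $1$ to the vertices $v_{3i+1}$ gives a $\gamma_R^\star$-function for which deleting the endpoints of the $E_2$-edges leaves only the pairwise nonadjacent vertices $v_{3i+1}$, so both conditions hold), but since you abandon that route in favor of the direct construction, this does not affect the validity of your argument.
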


\begin{proof}
Let $P_n = v_1v_2 \dotsc v_n$.
Clearly $\gamma_{pR}^\star(P_2)=2$.
For $n \geq 3$, we divides the following three cases.

\vskip5pt
Case 1.  $n \equiv 0$ (mod 3).
Define $f : V(P_n) \cup E(P_n) \rightarrow \{0, 1, 2\}$ by
$f(v_{3i+1})=1$, $f(v_{3i+2}v_{3i+3})=2$ for $0 \leq i \leq \frac{n-3}{3}$ and $f(x) = 0$ otherwise.

\vskip5pt
Case 2.  $n \equiv 1$ (mod 3).
Define $f : V(P_n) \cup E(P_n) \rightarrow \{0, 1, 2\}$ by
$f(v_{3i+1})=1$, $f(v_{3i+2}v_{3i+3})=2$ for $0 \leq i \leq \frac{n-4}{3}$, $f(v_n)=1$ and $f(x) = 0$ otherwise.

\vskip5pt
Case 3.  $n \equiv 2$ (mod 3).
Define $f : V(P_n) \cup E(P_n) \rightarrow \{0, 1, 2\}$ by
$f(v_{3i+1}v_{3i+2})=2$, $f(v_{3i+3})=1$ for $0 \leq i \leq \frac{n-5}{3}$, $f(v_{n-1}v_n)=2$ and $f(x) = 0$ otherwise.

In any case, it is easy to see that $f$ is a PMRDF with the weight $n$.
This completes the proof.
\end{proof}

\begin{proposition}\label{prop-2}
For a cycle $C_n$ of order $n$, $\gamma_{pR}^\star(C_n)=n$ if $n \equiv 0$ (mod 3), $n+1$ otherwise.
\end{proposition}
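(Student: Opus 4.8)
The plan is to prove matching bounds: $n\le\gamma_{pR}^\star(C_n)\le n$ when $3\mid n$, and $n+1\le\gamma_{pR}^\star(C_n)\le n+1$ otherwise. The lower bound $\gamma_{pR}^\star(C_n)\ge n$ is immediate from Theorem \ref{main:graphn}, since $C_n$ has order $n$ and a perfect middle Roman dominating function is in particular a middle Roman dominating function, so its weight is at least $\gamma_R^\star(C_n)=n$. For $n\equiv 0\pmod 3$ this is attained by the cyclic analogue of the path function from Proposition \ref{prop-1}, Case 1: set $f(v_{3i+1})=1$ and $f(v_{3i+2}v_{3i+3})=2$ for $0\le i\le\frac{n-3}{3}$ (indices modulo $n$) and $f(x)=0$ otherwise; since $3\mid n$ the pattern closes up with no overlap, and one checks directly that every element of weight $0$ is adjacent or incident to exactly one element of weight $2$. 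Hence $\gamma_{pR}^\star(C_n)=n$ in this case.

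Now assume $n\not\equiv 0\pmod 3$. For the upper bound $\gamma_{pR}^\star(C_n)\le n+1$ I would run the path construction of Proposition \ref{prop-1} along $v_1v_2\cdots v_n$ and spend one extra unit to repair the wrap-around edge $v_nv_1$, which in $M(C_n)$ is adjacent to both $v_{n-1}v_n$ and $v_1v_2$ and so cannot be left at weight $0$ without creating two weight-$2$ neighbours. Concretely: if $n=3k+1$, put $f(v_{3i+1})=1$ and $f(v_{3i+2}v_{3i+3})=2$ for $0\le i\le k-1$, $f(v_n)=1$, $f(v_nv_1)=1$, and $f(x)=0$ otherwise; if $n=3k+2$, put $f(v_{3i+1}v_{3i+2})=2$ and $f(v_{3i+3})=1$ for $0\le i\le k-1$, $f(v_{n-1}v_n)=2$, $f(v_nv_1)=1$, and $f(x)=0$ otherwise. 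Each function has weight $n+1$, and perfectness reduces to a finite check on the few elements incident with $v_n$ or $v_1$, the periodic middle section being handled exactly as in Proposition \ref{prop-1}.

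It remains to show that no PMRDF of weight $n$ exists when $3\nmid n$; this is the crux. Suppose $f$ were such a function. Since $\gamma_R^\star(C_n)=n$ by Theorem \ref{main:graphn}, $f$ is simultaneously a $\gamma_R^\star(C_n)$-function and a PMRDF (so in particular $\gamma_R^\star(C_n)=\gamma_{pR}^\star(C_n)$), whence the structural claims in the proof of Theorem \ref{thm-1} apply to it: $V_2=\emptyset$, $E_1=\emptyset$, and the edges of $E_2$ are pairwise non-adjacent with all their endpoints in $V_0$. The key observation is that the $n$ edges $e_1=v_1v_2,\dots,e_n=v_nv_1$ of $C_n$ induce a cycle on $n$ vertices inside $M(C_n)$, since consecutive edges of $C_n$ are adjacent there. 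On this cycle $E_2$ is an independent set; and for each $e_i\notin E_2$, applying the perfectness condition to $e_i$ forces exactly one of its edge-neighbours $e_{i-1},e_{i+1}$ to lie in $E_2$, because the vertex-neighbours $v_i,v_{i+1}$ of $e_i$ lie in $V_0$ and $V_2=\emptyset$. Hence $E_2$ is an efficient dominating set (perfect code) of $C_n$, which exists if and only if $3\mid n$, a contradiction. Therefore $\gamma_{pR}^\star(C_n)\ge n+1$, and with the construction above $\gamma_{pR}^\star(C_n)=n+1$. The main difficulties are getting the wrap-around repairs in the constructions exactly right and, above all, recognizing that a weight-$n$ PMRDF would encode a perfect code of the edge-cycle $C_n$.
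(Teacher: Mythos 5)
Your proof is correct, and at the crucial step it takes a genuinely different (and more explicit) route than the paper. The upper-bound constructions are essentially the paper's: for $3\mid n$ your cyclic pattern is, up to rotation, the same weight-$n$ function the paper obtains by gluing the weight-$2$ edge $v_{n-1}v_n$ onto the Case~2 path function of Proposition~\ref{prop-1}, and for $3\nmid n$ both proofs add $f(v_nv_1)=1$ to the Case~2/Case~3 path functions. Where you diverge is the impossibility of a weight-$n$ PMRDF when $3\nmid n$: the paper deletes the endpoints of a weight-$2$ edge, passes to $C_n-\{v_{n-1},v_n\}\cong P_{n-2}$, and simply cites Theorem~\ref{thm-1} to conclude $\gamma_{pR}^\star(C_n)>\gamma_R^\star(C_n)$, leaving the mod-$3$ obstruction largely implicit; you instead observe that a weight-$n$ PMRDF is automatically a $\gamma_R^\star(C_n)$-function to which the structural claims in the proof of Theorem~\ref{thm-1} apply ($V_2=\emptyset$, $E_1=\emptyset$, all endpoints of $E_2$-edges in $V_0$), and then read off from perfectness on the edge cycle inside $M(C_n)$ that $E_2$ would be an efficient dominating set (perfect code) of a cycle of length $n$, which exists only when $3\mid n$. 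This reformulation is self-contained and in fact supplies exactly the detail that the paper's one-line appeal to Theorem~\ref{thm-1} glosses over; its price is that you rely on the claims inside the proof of Theorem~\ref{thm-1} rather than on its statement, and on the standard (here unproved, but well-known and easy) fact about efficient domination in cycles. Two minor blemishes, neither fatal: the pairwise non-adjacency of $E_2$ is not literally one of the cited claims --- it follows because a shared endpoint would lie in $V_0$ by Claim~3 and would then see two weight-$2$ neighbours, contradicting perfectness (this is in effect the paper's Claim~4), so you should say that word; and your parenthetical reason that the vertex-neighbours $v_i,v_{i+1}$ of an edge $e_i\in E_0$ ``lie in $V_0$'' is not true in general (they may lie in $V_1$), but this is harmless since the only fact your argument needs, and which you also invoke, is $V_2=\emptyset$.
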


\begin{proof}
Let $C_n = v_1v_2 \dotsc v_nv_1$, and let $f$ be a $\gamma_{pR}^\star(C_n)$-function.
Suppose that $\omega(f) < |V(C_n) \cup E(C_n)|$.
Then there exists an element $x \in V(C_n) \cup E(C_n)$ such that $f(x)=2$.
Without loss of generality, we assume that $f(v_{n-1}v_n)=2$.
Consider $C_n - \{v_{n-1}, v_n\} \cong P_{n-2}$.
If $\gamma_{R}^\star(C_n) = \gamma_{pR}^\star(C_n)$, then
it follows from Theorem \ref{thm-1} that we have $f(v_1)=f(v_{n-2})=1$.

If $n-2 \equiv 1$ (mod 3), then by the Case 2 of Proposition \ref{prop-1}
$P_{n-2}$ has a PMRDF $g$ such that $g(v_1)=g(v_{n-2})=1$ and $\gamma_{pR}^\star(P_{n-2}) =n-2$.
This implies that $\gamma_{pR}^\star(C_n)=n$.

If $n-2 \not\equiv 1$ (mod 3), then Theorem \ref{thm-1} implies that $\gamma_{pR}^\star(C_n) > \gamma_{R}^\star(C_n)$.
Now we can define a PMRDF $f$ with the weight $n+1$ by giving $f(v_1v_n)=1$ in the Cases 2 and 3 of Proposition \ref{prop-1}.
Thus, $\gamma_{pR}^\star(C_n) = n+1$.
\end{proof}

Finally, we conclude our paper by suggesting the following problems.
\begin{problem}
For a complete bipartite graph $K_{m,n}$, what is the exact value of $\gamma_{pR}^\star(K_{m,n}) ?$
\end{problem}

\begin{problem}
For a complete graph $K_n$, what is the exact value of $\gamma_{pR}^\star(K_n) ?$
\end{problem}

\section*{Acknowledgment}

This research was supported by Basic Science Research Program through the National Research Foundation of Korea funded by the Ministry of Education (2020R1I1A1A01055403).

\footnotesize


\begin{thebibliography}{00}

\bibitem{Bondy08} J. A. Bondy, U. S. R. Murty, {\it Graph Theory}, Springer, London, 2008.

\bibitem{CDHH} E.J. Cockayne, P.A. Dreyer, S.M. Hedetniemi, S.T. Hedetniemi, Roman domination in graphs, \textit{Discrete Math.} \textbf{278} (2004) 11--22.

\bibitem{Darkooti} M. Darkooti, A. Alhevaz, S. Rahimi, H. Rahbani, On perfect Roman domination number in trees: complexity and bounds, {\it J. Comb. Optim.} {\bf 38} (2019) 712--720.

\bibitem{HY} T. Hamada, I. Yoshimura, Traversability and connectivity of the middle graph of a graph, \textit{Discrete Math.} \textbf{14} (1976) 247--255.


\bibitem{Henning2018}  M. A. Henning, W. F. Klostermeyer, G. MacGillivray, Perfect Roman domination in trees, {\it Discrete Appl. Math.} {\bf 236} (2018) 235--245.

\bibitem{Kim} K. Kim, Italian, $2$-rainbow and Roman domination numbers in middle graphs, submitted.

\bibitem{Kim1} K. Kim, Domination-related parameters in middle graphs, submitted. 



\bibitem{RR} C.S. ReVelle, K.E. Rosing, Defendens imperium Romanum: a classical problem in military strategy, \textit{Amer. Math. Monthly} \textbf{107} (2000) 585--594.


\bibitem{S} I. Stewart, Defend the Roman empire!, \textit{Sci. Amer.} \textbf{281} (1999) 136--139.

\bibitem{Yue-2020} J. Yue, J. Song, Note on perfect Roman domination number of graphs,  {\it Appl. Math. Comput. } {\bf 364} (2020) 124685.




\end{thebibliography}
\end{document}